\theoremstyle{plain}
\newtheorem{theorem}{Theorem}[section]
\newtheorem{corollary}[theorem]{Corollary}
\newtheorem{lemma}[theorem]{Lemma}
\theoremstyle{definition}
\newcommand{\Pelc}[1]{\left(\Sigma X_n\right)_p}
\DeclareSymbolFont{bbold}{U}{bbold}{m}{n}
\DeclareSymbolFontAlphabet{\mathbbold}{bbold}
\renewcommand{\le}{\leqslant}
\renewcommand{\ge}{\geqslant}
\begin{document}

\baselineskip 7mm

\title{On commutators of idempotents} 

\author{R. Drnov\v sek}
\address{Faculty of Mathematics and Physics, University of Ljubljana,
  Jadranska 19, 1000 Ljubljana, Slovenia \ \ \ and \ \ \
  Institute of Mathematics, Physics, and Mechanics, Jadranska 19, 1000 Ljubljana, Slovenia}
\email{roman.drnovsek@fmf.uni-lj.si}

 \keywords{Banach spaces, operators, idempotents, commutators}
 \subjclass[2020]{47B47, 39B42}

\date{\today}

\begin{abstract}
Let  $T$ be an operator on Banach space $X$ that is similar to $- T$ via an involution $U$. 
Then $U$ decomposes the Banach space $X$ as $X = X_1 \oplus X_2$ with respect to which decomposition we have 
$U = \left(\begin{matrix} I_1 & 0 \\  0 & -I_2 \end{matrix} \right)$, where $I_i$ is the identity operator on the closed subspace $X_i$ ($i=1, 2$).
Furthermore, $T$ has necessarily the form 
$T = \left(\begin{matrix} 0 & * \\  * & 0 \end{matrix} \right) $ with respect to the same decomposition. In this note we consider the question when $T$ 
is a commutator of the idempotent $P = \left(\begin{matrix} I_1 & 0 \\  0 & 0 \end{matrix} \right)$ and some idempotent $Q$ on $X$.
We also determine which scalar multiples of unilateral shifts on $l^p$ spaces  ($1 \le p \le \infty$)
are commutators of idempotent operators.
\end{abstract}

\maketitle

\section{Introduction}

A (bounded linear) operator $C$ on a Banach space is said to be the {\it commutator} of the 
operators $A$ and $B$ if $C = A B - B A = [A, B]$. The operators on a Hilbert space
that arise as commutators have been characterized by Brown and Pearcy 
\cite{BP65} as the operators that are not the sum of a compact operator
and a non-zero scalar multiple of the identity.

It is natural to ask which operators are commutators of operators of given forms.
For example, commutators of self-adjoint operators have been characterized in \cite{Ra66},  and commutators of idempotent matrices have been characterized in \cite{DRR02}.
In this paper we first improve the  ring-theoretic characterization of commutators of idempotents from  \cite{DRR02}, and then we characterize  commutators of the idempotent operators on a Banach space.
Motivated with the case $p=2$ in \cite[Corollary 5.9]{MRZ23} we also determine which scalar multiples of unilateral shifts on $l^p$ spaces  ($1 \le p \le \infty$)
are commutators of idempotent operators.

We now recall some definitions. Let $R$ be a unital ring with identity $1$. An {\it idempotent} is any $p \in R$ such that $p^2 = p$, while $u \in R$ is called an {\it involution} if $u^2 = 1$.
Of course, elements in $R$ of the form $[a, b] := a b - b a$ are called {\it commutators}. \\

\section{A characterization of commutators of idempotents}

We first complement the ring-theoretic characterization of commutators of idempotents as given in \cite[Theorem 1]{DRR02}. This sheds new light on the proof of  \cite[Theorem 1]{DRR02}.

\begin{theorem}
Let $R$ be a unital ring with identity $1$ in which the element $2$ is invertible, and let $t \in R$. The following assertions are equivalent:
\begin{enumerate}
\item $t$ is a commutator of a pair of idempotents in $R$;

\item $4 t$ is a commutator of a pair of involutions in $R$;

\item There exist $u \in R$ and $s \in R$ such that $u^2 = 1$, $u t + t u = 0$, 
$u s = s u$, $s t = t s$ and $s^2 = t^2 + 1/4$.
\end{enumerate}
\label{ring}
\end{theorem}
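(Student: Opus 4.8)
The plan is to prove the equivalence of the three assertions cyclically, establishing (i) $\Rightarrow$ (ii) $\Rightarrow$ (iii) $\Rightarrow$ (i), with the passage between idempotents and involutions handled by the standard Cayley-type correspondence. Let me think through each implication.

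**(i) $\Rightarrow$ (ii).** Suppose $t = [p, q] = pq - qp$ for idempotents $p, q$. The natural move is to replace each idempotent by the involution $u = 2p - 1$, $v = 2q - 1$, which is a bijective correspondence between idempotents and involutions precisely because $2$ is invertible. A direct computation gives $[u, v] = (2p-1)(2q-1) - (2q-1)(2p-1) = 4(pq - qp) = 4t$. So $4t = [u, v]$ is a commutator of involutions. This direction is essentially automatic.

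**(ii) $\Rightarrow$ (iii).** Now suppose $4t = [u, v] = uv - vu$ with $u^2 = v^2 = 1$. I want to extract from this the data in (iii): an involution (call it $u$) anticommuting with $t$, together with an element $s$ commuting with both $u$ and $t$ and satisfying $s^2 = t^2 + 1/4$. The key observation is that since $u^2 = 1$, conjugation by $u$ is an involution of $R$, and I should check how $t = \frac{1}{4}(uv - vu)$ transforms: compute $u t u^{-1} = u t u = \frac{1}{4}(u \cdot uv - u \cdot vu)u = \frac{1}{4}(vu - uv) = -t$, using $u^2=1$. Hence $utu = -t$, i.e. $ut + tu = 0$, giving the first two relations $u^2=1$ and $ut+tu=0$ for free. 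The substance of this direction is producing $s$. I expect the right candidate is built from $v$ and $u$: since $v^2 = 1$, I would examine the element $s := \frac{1}{2}(uv + vu)$ or perhaps $s := \frac{1}{2}uvu \cdot(\text{something})$, aiming for the symmetrized part of the product. A natural guess is $s = \tfrac12 (vu + uv)$ scaled appropriately; one then checks $s$ commutes with $u$ (the anticommutator is $u$-symmetric) and with $t$, and computes $s^2$ against $t^2 = \frac{1}{16}(uv - vu)^2$ using $u^2 = v^2 = 1$ to expand $(uv)^2$, $(vu)^2$, $uvvu = 1$, etc.\ until the relation $s^2 = t^2 + 1/4$ falls out.

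**(iii) $\Rightarrow$ (i).** Given $u, s$ as in (iii), I would reconstruct idempotents. Set $p = \frac{1}{2}(1+u)$, which is an idempotent since $u^2=1$. For the second idempotent I expect to build $q$ out of $t$ and $s$ so that $[p,q]$ recovers $t$. Writing everything in the Peirce decomposition induced by $p$ (equivalently the $\pm1$ eigenspace splitting of $u$), the relation $ut+tu=0$ forces $t$ to be purely off-diagonal, matching the $\left(\begin{smallmatrix} 0 & * \\ * & 0\end{smallmatrix}\right)$ shape from the abstract, while $s$ (commuting with $u$) is diagonal. The condition $s^2 = t^2 + \frac14$ is exactly what makes $q = \frac12(1 + 2t + 2s\,?)$ — or more precisely an element whose diagonal part is governed by $s$ and off-diagonal part by $t$ — square to itself; one verifies $[p,q] = t$ by direct multiplication.

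The main obstacle I anticipate is the precise construction of $s$ in (ii) $\Rightarrow$ (iii) and the matching construction of $q$ in (iii) $\Rightarrow$ (i): getting the constants and the symmetrization right so that $s^2 = t^2 + 1/4$ holds on the nose and so that the candidate $q$ is genuinely idempotent. The invertibility of $2$ is essential throughout (for the $\frac12(1\pm u)$ idempotents and the factor $4$), and I would keep track of where it is used. Everything else reduces to careful but routine manipulation of the relations $u^2=1$, $v^2=1$, and the commutation/anticommutation identities.
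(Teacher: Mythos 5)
Your architecture is exactly the paper's: the Cayley correspondence $p \mapsto u = 2p-1$ for (i) $\Rightarrow$ (ii), the symmetrized product of the two involutions for $s$ in (ii) $\Rightarrow$ (iii), and reconstruction of the second idempotent/involution from $u, s, t$ to close the cycle (the paper proves (iii) $\Rightarrow$ (ii) via $v := 2u(s+t)$ and then uses the omitted ``similar'' implication (ii) $\Rightarrow$ (i); your direct (iii) $\Rightarrow$ (i) is just the composition of those two steps). The implication (i) $\Rightarrow$ (ii) and the derivation $ut+tu=0$ are complete and correct. However, the two steps you yourself identify as the substance of the theorem are left unfinished, and your leading candidate for $s$ is wrong: with $s = \frac12(uv+vu)$ one gets $(uv+vu)^2 = (uv)^2 + 2 + (vu)^2$, hence $s^2 - t^2 = \frac{3}{16}\bigl((uv)^2+(vu)^2\bigr) + \frac58$, which is not $\frac14$ in general (take $u=v=1$, so $t=0$ but $s=1$ and $s^2 = 1 \neq \frac14$). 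The correct normalization --- which your ``scaled appropriately'' family does contain, and is in fact forced: $s = c(uv+vu)$ satisfies $s^2 = t^2 + \frac14$ iff $c^2 = \frac1{16}$ --- is the paper's $s = \frac14(uv+vu)$, for which $16(s^2-t^2) = \bigl((uv)^2+2+(vu)^2\bigr) - \bigl((uv)^2-2+(vu)^2\bigr) = 4$; the commutation relations $us=su$ and $st=ts$ hold for any scalar multiple, as you anticipated.

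For (iii) $\Rightarrow$ (i) you stop at a formula with a question mark in it. The element completing your Peirce-decomposition picture is $q = \frac12 + us + ut$: using $usu = s$, $utu = -t$ and $st=ts$ one gets $(us+ut)^2 = s^2 + st - ts - t^2 = s^2 - t^2 = \frac14$, whence $q^2 = \frac14 + (us+ut) + \frac14 = q$, and with $p = \frac12(1+u)$ one computes $uq = \frac{u}{2} + s + t$ and $qu = \frac{u}{2} + s - t$, so $[p,q] = \frac12(uq - qu) = t$. Note $q = \frac12(1+v)$ with $v = 2u(s+t) = 2(s-t)u$, which is precisely the involution the paper writes down, checking $v^2 = 4(s^2-t^2) = 1$ and $uv - vu = 2(s+t) - 2(s-t) = 4t$. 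So the gap is not one of strategy --- your plan is the paper's proof --- but as written the proposal offers an incorrect constant for $s$ and no actual element $q$, and both verifications (routine, as you predicted) are exactly where the hypothesis $s^2 = t^2 + \frac14$ and the invertibility of $2$ must be spent; until they are carried out the argument is an outline rather than a proof.
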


\begin{proof}
Assume that $t = p q - q p$ for some idempotents $p$ and $q$. Define 
$u := 2 p - 1$ and $v := 2 q - 1$. Then $u^2 = v^2= 1$ and 
$$ 4 t = [2p, 2q] = [2p-1,2q] = [u, 2q-1] = [u, v] . $$
This proves the implication (i) $\Rightarrow$ (ii).
Since the proof of the converse implication is similar, we omit it.

Now assume (ii), that is, $4 t = u v - v u$ for some involutions $u$ and $v$. Then 
$$ 4 u  t + 4 t u =  (v - u v u) + (u v u - v) = 0  . $$
Define 
$$ s := \frac{1}{4} (u v + v u)  . $$ 
Then
$$ 4 u s = v + u v u = 4 s u , $$
$$ 16 s t = (u v + v u)(u v - v u) = (u v)^2 - 1 + 1 - (v u)^2 = 16 t s, $$ 
and 
$$ 16 s^2 - 16 t^2 =((u v)^2 + 1+1 + (v u)^2) - ((u v)^2 - 1-1 + (v u)^2) = 4 . $$
This completes the proof of the implication (ii) $\Rightarrow$ (iii).

Now assume (iii). Define $v := 2 u (s+t) = 2 (s - t) u$. Then 
$$ v^2 = 2 (s-t) u \cdot 2 u (s+t) = 4(s^2 - t^2) = 1 , $$
and 
$$ u v - v u = 2(s+t)-2(s-t) = 4 t . $$
This proves (ii), and it completes the proof of the theorem.
\end{proof}

Let $T$ be an operator on a complex Banach space $X$. Suppose that $T$ is similar to $- T$ via an involution $U$, so that $T U + U T = 0$.
Then $U$ decomposes the Banach space $X$ as $X = X_1 \oplus X_2$ with respect to which decomposition we have 
$$ U = \left(\begin{matrix} I_1 & 0 \\  0 & -I_2 \end{matrix} \right) , $$
where $I_i$ is the identity operator on the closed subspace $X_i$ ($i=1, 2$).
Write $$ T = \left(\begin{matrix} A & B \\  C & D \end{matrix} \right)  $$
with respect to the same decomposition. 
Since  $T U + U T = 0$, we have $A = 0$ and $D= 0$. Now \Cref{ring} gives the following characterization.

\begin{theorem}
The operator 
 $$ T = \left(\begin{matrix} 0 & B \\  C & 0 \end{matrix} \right) $$
is a commutator of the idempotent $P = \left(\begin{matrix} I_1 & 0 \\  0 & 0 \end{matrix} \right)$ and some idempotent $Q$ on $X$ if and only if 
the operator $B C + \frac14 I_1$ has a square root $S_1$ on $X_1$, 
the operator $C B + \frac14 I_2$ has a square root $S_2$ on $X_2$, and $S_1 B = B S_2$, $S_2 C = C S_1$. 
\label{operators}
\end{theorem}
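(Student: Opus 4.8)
The plan is to apply \Cref{ring} with $R$ equal to the ring $B(X)$ of bounded operators on $X$ and $t = T$; here $2$ is invertible in $B(X)$ since the field is complex, so the hypotheses are met. The crucial observation is that $U = 2P - 1$, so that the standing assumption $T U + U T = 0$ is precisely the relation $u t + t u = 0$ appearing in assertion (iii), read with $u = U$. Moreover, under the bijection $p \leftrightarrow 2p - 1$ between idempotents and involutions (which underlies the equivalence (i) $\Leftrightarrow$ (ii)), requiring the first idempotent in the commutator to be exactly $P$ corresponds to holding the involution $U$ fixed throughout.

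First I would record the refinement that the equivalence (ii) $\Leftrightarrow$ (iii) of \Cref{ring} holds with the involution $u$ \emph{held fixed}: in the displayed proof of (ii) $\Rightarrow$ (iii) the element $s$ is manufactured from the given $u$ (and $v$), while in the proof of (iii) $\Rightarrow$ (ii) the involution $v$ is manufactured from the given $u$ (and $s$); in neither direction is $u$ altered. Combining this with the same bijection applied to $p = P$, I obtain that $T = [P, Q]$ for some idempotent $Q$ if and only if there exists $S \in B(X)$ satisfying $U S = S U$, $S T = T S$ and $S^2 = T^2 + \frac14 I$.

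It then remains to translate these three operator relations into the matrix conditions of the statement. Since $U = \left(\begin{matrix} I_1 & 0 \\ 0 & -I_2\end{matrix}\right)$, writing $S$ as a $2\times 2$ block matrix and imposing $U S = S U$ forces the off-diagonal blocks to vanish (using once more that $2$ is invertible), so that $S = \left(\begin{matrix} S_1 & 0 \\ 0 & S_2\end{matrix}\right)$ for some $S_1$ on $X_1$ and $S_2$ on $X_2$. A direct computation gives $T^2 = \left(\begin{matrix} B C & 0 \\ 0 & C B\end{matrix}\right)$, so $S^2 = T^2 + \frac14 I$ reads blockwise as $S_1^2 = B C + \frac14 I_1$ and $S_2^2 = C B + \frac14 I_2$, exhibiting $S_1$ and $S_2$ as the required square roots. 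Likewise $S T = T S$ unravels into the two intertwining relations $S_1 B = B S_2$ and $S_2 C = C S_1$. Conversely, any pair $S_1, S_2$ with these four properties reassembles into a block-diagonal $S$ satisfying all three operator relations, which closes the equivalence.

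The only genuinely delicate point is this fixed-involution reading of \Cref{ring}: one must verify that the constructions in its proof never disturb $u$, so that the \emph{specific} idempotent $P$ (rather than some unspecified one) can be matched. Once that is secured, the remainder is a routine block-matrix computation with no analytic content, the boundedness of $Q$, $S$, $S_1$, and $S_2$ being automatic in $B(X)$.
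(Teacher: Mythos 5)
Your proposal is correct and takes essentially the same route as the paper: both reduce the theorem to the fixed-involution reading of \Cref{ring} (the paper's phrase ``by \Cref{ring} and its proof'') and then translate the three relations $SU=US$, $ST=TS$, $S^2=T^2+\frac14 I$ into the block conditions via the same $2\times 2$ computations. Your explicit check that the constructions $s=\frac14(uv+vu)$ and $v=2u(s+t)$ never disturb $u$ merely spells out the point the paper leaves implicit.
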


\begin{proof}
($\Rightarrow$)
 By \Cref{ring} and its proof, there exists an operator $S$ on $X$ such that $S U = U S$, $S T = T S$ and $S^2= T^2 + \frac14 I$. 
Since $S U = U S$, $S$ has the form $S = \left(\begin{matrix} S_1 & 0 \\  0 & S_2 \end{matrix} \right)$. 
Since 
$$ \left(\begin{matrix} S_1^2 & 0 \\  0 & S_2^2 \end{matrix} \right) = S^2 = T^2+ \frac14 I = \left(\begin{matrix} B C + \frac14 I_1 & 0 \\  0 & C B+ \frac14 I_2 \end{matrix} \right) , $$
we have $S_1^2 = B C + \frac14 I_1$ and $S_2^2 = C B+ \frac14 I_2$. It follows from $S T = T S$ that $S_1 B = B S_2$ and $S_2 C = C S_1$. 

($\Leftarrow$)
Define $S := \left(\begin{matrix} S_1 & 0 \\  0 & S_2 \end{matrix} \right)$ on $X = X_1 \oplus X_2$. Then 
$$ S^2 = \left(\begin{matrix} S_1^2 & 0 \\  0 & S_2^2 \end{matrix} \right) = \left(\begin{matrix} B C + \frac14 I_1 & 0 \\  0 & C B+ \frac14 I_2 \end{matrix} \right) = T^2+ \frac14 I  . $$
Since $S_1 B = B S_2$, $S_2 C = C S_1$, we have $S T = T S$. Clearly, $S U = U S$. By \Cref{ring} and its proof,
$T = \left(\begin{matrix} 0 & B \\  C & 0 \end{matrix} \right) $  is a commutator of the idempotent $P = \left(\begin{matrix} I_1 & 0 \\  0 & 0 \end{matrix} \right)$ and some idempotent $Q$ on the Banach space $X$. 
\end{proof}

We say that a compact subset $K$  of the complex plane {\it does not separate $0$ from $\infty$} if $0$ lies in the unbounded component of the complement of $K$. 

\begin{corollary}
Let $$ T = \left(\begin{matrix} 0 & B \\  C & 0 \end{matrix} \right) $$ be an operator on the Banach space  $X = X_1 \oplus X_2$.
Suppose that $\sigma(B C + \frac14 I_1)$ does not separate $0$ from $\infty$. Then $T$ is a commutator of the idempotent $P = \left(\begin{matrix} I_1 & 0 \\  0 & 0 \end{matrix} \right)$ and some idempotent $Q$ on $X$.
\label{separate}
\end{corollary}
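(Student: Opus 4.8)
The plan is to verify the hypotheses of \Cref{operators} by producing the two square roots $S_1, S_2$ and the two intertwining relations all at once, through a single application of the holomorphic functional calculus. Write $N := T^2 + \frac14 I$, which with respect to the decomposition $X = X_1 \oplus X_2$ is the block-diagonal operator with diagonal entries $M_1 := B C + \frac14 I_1$ and $M_2 := C B + \frac14 I_2$. Thus $\sigma(N) = \sigma(M_1) \cup \sigma(M_2)$, and any holomorphic function of $N$ is again block-diagonal, with diagonal blocks the corresponding functions of $M_1$ and $M_2$.

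First I would record the spectral relationship between $M_1$ and $M_2$. Since $\sigma(BC) \setminus \{0\} = \sigma(CB) \setminus \{0\}$, the spectra of $M_1$ and $M_2$ coincide off the single point $\frac14$, so $\sigma(N) \subseteq \sigma(M_1) \cup \{\tfrac14\}$. Because $\frac14 \neq 0$, and deleting a single point from a planar open set leaves $0$ in the same (unbounded) component, the hypothesis that $\sigma(M_1)$ does not separate $0$ from $\infty$ passes to $\sigma(N)$: the point $0$ still lies in the unbounded component of $\mathbb{C} \setminus \sigma(N)$.

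Next I would construct the square root. Choosing a Jordan arc from $0$ to $\infty$ inside the unbounded component of $\mathbb{C} \setminus \sigma(N)$, the complementary slit region is simply connected, avoids $0$, and contains a neighborhood of $\sigma(N)$; on it there is a holomorphic branch $r$ of $z \mapsto \sqrt{z}$. Set $S := r(N)$ via the Riesz functional calculus, so that $S^2 = N$, and $S$ commutes with every operator commuting with $N$. Since $UT + TU = 0$ gives $U T^2 = T^2 U$, and trivially $T\,T^2 = T^2\,T$, both $U$ and $T$ commute with $N$, hence with $S$. Commuting with $U$ forces $S = \mathrm{diag}(S_1, S_2)$ with $S_1 = r(M_1)$ and $S_2 = r(M_2)$, whence $S_1^2 = B C + \frac14 I_1$ and $S_2^2 = C B + \frac14 I_2$; commuting with $T$ unwinds, via the off-diagonal entries of $ST = TS$, to exactly $S_1 B = B S_2$ and $S_2 C = C S_1$ (consistent with the identities $M_1 B = B M_2$ and $M_2 C = C M_1$). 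These are precisely the conditions of \Cref{operators}, which then yields that $T$ is a commutator of $P$ and some idempotent $Q$.

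The one step needing genuine care is the passage from $\sigma(M_1)$ to $\sigma(N)$: one must be sure that enlarging the spectrum by the possible extra point $\frac14$ cannot create a hole trapping $0$. This is where the hypothesis is used essentially, together with the elementary point-set fact that removing a point distinct from $0$ does not disconnect $0$ from $\infty$. Everything else is the standard existence of a holomorphic square root on a compact set not separating $0$ from $\infty$, and the routine verification that the functional calculus respects the commutation relations with $U$ and $T$.
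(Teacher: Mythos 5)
Your proposal is correct and follows essentially the same route as the paper: a holomorphic branch of the square root supplied by the Riesz functional calculus on a region containing $\sigma(B C + \frac14 I_1) \cup \{\frac14\}$, resting on the fact that $\sigma(BC)$ and $\sigma(CB)$ agree off $0$, followed by an appeal to \Cref{operators}. The only cosmetic difference is that you take a single square root of the block-diagonal operator $N = T^2 + \frac14 I$ and recover $S_1$, $S_2$ and the intertwining relations from commutation with $U$ and $T$, whereas the paper defines $S_1 = f(B C + \frac14 I_1)$ and $S_2 = f(C B + \frac14 I_2)$ separately and derives $S_1 B = B S_2$, $S_2 C = C S_1$ from the identities $(B C + \frac14 I_1)B = B(C B + \frac14 I_2)$ and $C(B C + \frac14 I_1) = (C B + \frac14 I_2)C$; your explicit verification that adjoining the point $\frac14 \neq 0$ cannot disconnect $0$ from $\infty$ makes precise a step the paper leaves implicit.
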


\begin{proof}
By the Riesz functional calculus, the assumption implies that $B C + \frac14 I_1$ admits a square root 
$S_1$ which lies in the norm-closed algebra generated by $B C + \frac14 I_1$.
More precisely, the hypothesis implies that there is a function $f$, holomorphic in a simply connected open set $\Omega$ containing the closed set $\sigma(B C + \frac14 I_1) \cup\{ \frac14 \}$, which satisfies $(f(z))^2 = z$. 
It is well-known that the spectra $\sigma(B C + \frac14 I_1)$ and $\sigma(C B + \frac14 I_2)$ differ perhaps only in the point $\frac14$. So, we can define 
$S_1 = f(B C + \frac14 I_1)$ and $S_2 = f(C B + \frac14 I_2)$. 
Since $(B C + \frac14 I_1) B = B (C B+ \frac14 I_2)$, we have $S_1 B = B S_2$. Similarly, since $C (B C + \frac14 I_1) = (C B+ \frac14 I_2) C$, it holds that $C S_1 = S_2 C$.
By \Cref{operators},  $T$ is a commutator of the idempotent $P = \left(\begin{matrix} I_1 & 0 \\  0 & 0 \end{matrix} \right)$ and some idempotent $Q$ on the Banach space $X$. 
\end{proof}

As a special case of \Cref{separate} we obtain the following generalization of \cite[Lemma 5.6]{MRZ23}.

\begin{corollary}
Let $$ T = \left(\begin{matrix} 0 & B \\  C & 0 \end{matrix} \right) $$ be an operator on the Banach space  $X = X_1 \oplus X_2$.
Suppose that $r(B C) < \frac14$, where $r$ denotes the spectral radius function. Then $T$ is a commutator of the idempotent $P = \left(\begin{matrix} I_1 & 0 \\  0 & 0 \end{matrix} \right)$ and some idempotent $Q$ on $X$.
\label{14}
\end{corollary}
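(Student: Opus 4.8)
The plan is to read the statement as a genuine special case of \Cref{separate}, so the whole task reduces to verifying the single hypothesis of that corollary: that the compact set $\sigma(B C + \frac14 I_1)$ does not separate $0$ from $\infty$. I expect this to follow from a one-line spectral-mapping remark combined with an elementary planar-geometry observation, with no serious obstacle anywhere — the mathematical content sits entirely in \Cref{separate}, and here $r(BC) < \frac14$ is simply the cleanest sufficient condition that forces the translated spectrum into a disk avoiding the origin.

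First I would use the fact that adding a scalar operator merely translates the spectrum, so that
$$ \sigma\!\left(B C + \frac14 I_1\right) = \frac14 + \sigma(B C) . $$
By the definition of the spectral radius together with the hypothesis $r(BC) < \frac14$, the spectrum $\sigma(BC)$ is contained in the closed disk $\{ z : \abs{z} \le r(BC) \}$, whose radius is strictly less than $\frac14$. Translating by $\frac14$, I conclude that $\sigma(B C + \frac14 I_1)$ is contained in the closed disk
$$ K := \left\{ z \in \mathbb{C} : \bigabs{\,z - \tfrac14\,} \le r(BC) \right\} . $$

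Next I would observe that $0 \notin K$, since the distance $\abs{0 - \frac14} = \frac14$ from $0$ to the centre of $K$ strictly exceeds the radius $r(BC)$. Because $K$ is a closed disk, its complement $\mathbb{C} \setminus K$ is connected and unbounded and contains $0$; hence $0$ and $\infty$ lie in the same component of $\mathbb{C} \setminus K$. As $\sigma(B C + \frac14 I_1) \subseteq K$, we have $\mathbb{C} \setminus K \subseteq \mathbb{C} \setminus \sigma(B C + \frac14 I_1)$, and therefore $0$ lies in the unbounded component of the complement of $\sigma(B C + \frac14 I_1)$. Thus $\sigma(B C + \frac14 I_1)$ does not separate $0$ from $\infty$, and \Cref{separate} applies directly to yield that $T$ is a commutator of $P$ and some idempotent $Q$. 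The only point demanding a moment's care is the distinction between the strict bound $r(BC) < \frac14$ and the weaker non-separation condition of \Cref{separate}: the former is strictly stronger (it guarantees the whole disk $K$, not just the spectrum, avoids $0$), which is exactly why this is recorded as a corollary rather than a characterization.
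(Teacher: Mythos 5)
Your proof is correct and is exactly the argument the paper intends: the paper states Corollary~\ref{14} as an immediate special case of Corollary~\ref{separate} without writing out the verification, and your translation $\sigma(BC+\frac14 I_1)=\frac14+\sigma(BC)$ together with the observation that a closed disk of radius $r(BC)<\frac14$ centred at $\frac14$ misses $0$ and has connected unbounded complement is precisely the omitted check. Nothing further is needed.
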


\section{Multiples of unilateral shifts on $l^p$-spaces}

Let us begin with the following simple lemma.

\begin{lemma}
Let $A$ be an operator on a Banach space $X$ such that $\dim (\ker A) = 1$ and $\dim (\ker A^2) = 2$. Then $A$ is not a square of some operator $B$ on $X$.
\label{not_square}
\end{lemma}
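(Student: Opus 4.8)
The plan is to argue by contradiction: I would suppose that $A = B^2$ for some bounded operator $B$ on $X$ and then derive a conflict with the hypothesis $\dim(\ker A^2) = 2$.

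The first step is to relate the kernels of the powers of $B$ to those of $A$. Since $A = B^2$, we have $\ker A = \ker B^2$ and $\ker A^2 = \ker B^4$; in particular $\dim(\ker B^2) = 1$ and $\dim(\ker B^4) = 2$. Because $A$ fails to be injective, $B$ cannot be injective either, so $\ker B \neq \{0\}$. On the other hand the trivial inclusion $\ker B \subseteq \ker B^2$ forces $\dim(\ker B) \le \dim(\ker B^2) = 1$. Hence $\dim(\ker B) = 1 = \dim(\ker B^2)$, and the inclusion $\ker B \subseteq \ker B^2$ must actually be an equality, $\ker B = \ker B^2$.

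The second step is the standard stabilization of the kernel chain: if $\ker B^k = \ker B^{k+1}$ for some $k$, then $\ker B^k = \ker B^m$ for every $m \ge k$. I would prove this by a short induction, noting that if $B^{k+2} x = 0$ then $B x \in \ker B^{k+1} = \ker B^k$, whence $B^{k+1} x = 0$. Applying this with $k = 1$, the equality $\ker B = \ker B^2$ established above propagates all the way to $\ker B = \ker B^4$.

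Combining the two steps produces the contradiction: $\ker A^2 = \ker B^4 = \ker B$, so $\dim(\ker A^2) = \dim(\ker B) = 1$, contradicting the assumption $\dim(\ker A^2) = 2$. I do not expect a serious obstacle; the whole argument rests on counting dimensions along the chain $\ker B \subseteq \ker B^2 \subseteq \ker B^4$, and the only point that needs to be invoked carefully is the kernel-stabilization lemma, which is an elementary and well-known fact valid for a single operator on any vector space.
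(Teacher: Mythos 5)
Your proof is correct and follows essentially the same route as the paper: both arguments reduce to showing $\ker B = \ker B^2$ and then invoking the stabilization of the kernel chain to get $\ker B = \ker B^4 = \ker A^2$, contradicting $\dim(\ker A^2) = 2$. The only difference is cosmetic: the paper cites the stabilization fact from the literature (\cite[Lemma 2.19]{AA02}), whereas you prove it inline with the short induction, which makes your write-up self-contained.
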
 

\begin{proof}
Assume that $A = B^2$ for some operator $B$ on $X$. Then $\ker B \subseteq \ker A$, and so either $\ker B = \{0\}$ or $\ker B =\ker A$. 
If $\ker B = \{0\}$, then $\ker A =\ker B^2 = \{0\}$ that is not true. 
Therefore, $\ker B =\ker A= \ker B^2$. Then $\ker B^k =\ker B$ for all positive integers $k$, by \cite[Lemma 2.19]{AA02}. In particular, 
$\ker B = \ker B^4 = \ker A^2$. This contradicts the assumption that  $\dim (\ker A^2) = 2$. 
 \end{proof}

Let $S$ be the unilateral forward shift on either $l^p$ ($1 \le p \le \infty$) or $c_0$, that is, the operator defined by 
$S(x_1, x_2, x_3, \ldots ) = (0, x_1, x_2, x_3, \ldots )$. 
In \cite[Corollary 5.9]{MRZ23} it is shown that when $p=2$ the operator $\mu S$ is a commutator of idempotents if and only if the complex number $\mu$ satisfies $|\mu| \le \frac12$.
We now consider this in our context. To end this, we first write $S$ in the block form $\left(\begin{matrix} 0 & * \\  * & 0 \end{matrix} \right)$.
Let $\{e_n\}_{n}$ denote the standard unit vectors in either $l^p$ or $c_0$, and write $X_1 = \bigvee_n \{e_{2 n}\}$ and $X_2 = \bigvee_n \{e_{2 n-1}\}$. Then $X = X_1 \oplus X_2$, and with respect to this decomposition $S$ has the form 
$$ S_0 := \left(\begin{matrix} 0 & I \\  S & 0 \end{matrix} \right) . $$

\begin{theorem}
Let $\mu$ be a complex number. Then $\mu S_0$ is a commutator of the idempotent $P = \left(\begin{matrix} I_1 & 0 \\  0 & 0 \end{matrix} \right)$ and some idempotent $Q$ on either $l^p$ ($1 \le p < \infty$) or $c_0$
if and only if $|\mu| \le \frac12$.
\label{shift}
\end{theorem}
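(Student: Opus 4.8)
The plan is to feed $T=\mu S_0$ into \Cref{operators}. Under the identifications $X_1, X_2 \cong l^p$, the off-diagonal blocks are $B=\mu I$ and $C=\mu S$, so $BC=\mu^2 S$ on $X_1$ and $CB=\mu^2 S$ on $X_2$; write $A:=\mu^2 S+\tfrac14 I$ for this common operator. The case $\mu=0$ is trivial since $0=[P,P]$, so assume $\mu\neq0$. Then the conditions of \Cref{operators} read: $A$ has a square root $S_1$ on $X_1$ and a square root $S_2$ on $X_2$ with $\mu S_1=\mu S_2$ and $\mu S_2 S=\mu S S_1$; equivalently $S_1=S_2=:R$ with $RS=SR$. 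But if $R^2=A$ then $S=\mu^{-2}(R^2-\tfrac14 I)$ is a polynomial in $R$, so $RS=SR$ holds automatically. Hence the whole statement reduces to the single assertion that $A=\mu^2 S+\tfrac14 I$ admits a bounded square root on $l^p$ (resp. $c_0$) if and only if $|\mu|\le\tfrac12$.

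For the ``if'' direction I would exhibit the square root explicitly by the binomial series
$$ R:=\tfrac12\sum_{j\ge0}\binom{1/2}{j}(4\mu^2)^j S^j . $$
Since $\norm{S^j}=1$ and $\sum_j\bigabs{\binom{1/2}{j}}<\infty$, this series converges absolutely in operator norm whenever $4|\mu|^2\le1$, that is $|\mu|\le\tfrac12$; forming the Cauchy product (the powers of $S$ commute) gives $R^2=\tfrac14(I+4\mu^2 S)=A$. As $R$ is a norm-limit of polynomials in $S$, it commutes with $S$, so taking $S_1=S_2=R$ meets all hypotheses of \Cref{operators}. Alternatively, for the strict range $|\mu|<\tfrac12$ one may simply quote \Cref{14}, since $r(BC)=|\mu|^2<\tfrac14$; the boundary $|\mu|=\tfrac12$ is exactly what the binomial series contributes.

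For the ``only if'' direction --- the heart of the matter --- assume $|\mu|>\tfrac12$ and suppose, for contradiction, that $R^2=A$ for some bounded $R$. As above $RS=SR$, so $Re_n=RS^{n-1}e_1=S^{n-1}c$ where $c:=Re_1$. Boundedness of $R$ forces $c\in l^p$ (resp. $c_0$), hence $c_m\to0$. Computing $R^2e_1=Rc=\sum_k c_k S^{k-1}c$ and comparing its $m$-th coordinate with that of $Ae_1=\tfrac14 e_1+\mu^2 e_2$ yields the convolution relations $\sum_{k=1}^m c_k\,c_{m-k+1}=(Ae_1)_m$. These say precisely that $\sum_{m\ge1}c_m z^{m-1}$ is a formal square root of $\tfrac14+\mu^2 z$; since $c_1^2=\tfrac14\neq0$, uniqueness forces $c_m=\pm\tfrac12\binom{1/2}{m-1}(4\mu^2)^{m-1}$. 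This power series has radius of convergence $\tfrac1{4|\mu|^2}<1$, so by Cauchy--Hadamard $\limsup_m|c_m|^{1/m}=4|\mu|^2>1$ and $(c_m)$ is unbounded --- contradicting $c_m\to0$. Thus $A$ has no square root, and \Cref{operators} shows $\mu S_0$ is not a commutator.

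The main obstacle is this last step: one must recognize that any square root of $A$ is forced to commute with the shift and is therefore an ``analytic Toeplitz'' operator whose symbol is the formal square root of $\tfrac14+\mu^2 z$. The growth of that symbol's coefficients for $|\mu|>\tfrac12$ is incompatible with the decay $c_m\to0$ demanded by membership in $l^p$ (or $c_0$) --- which is also precisely where the excluded case $p=\infty$ would escape the argument, since a bounded sequence need not tend to $0$. Once the commutation $RS=SR$ is secured, extracting the coordinate recursion and invoking Cauchy--Hadamard is routine.
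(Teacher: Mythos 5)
Your proof is correct, but for the hard direction it takes a genuinely different route from the paper's. The reduction via \Cref{operators} to the single question of whether $A=\mu^2 S+\frac14 I$ admits a bounded square root matches the paper, and your observation that the commutation requirements collapse automatically (for $\mu\neq 0$, necessarily $S_1=S_2=R$, and $RS=SR$ is free because $S=\mu^{-2}(R^2-\frac14 I)$ is a polynomial in $R$) is a nice simplification that the paper leaves implicit; likewise your binomial series for $|\mu|\le\frac12$ is exactly the paper's Case 2, merely extended to subsume its Case 1 (which the paper instead dispatches with the spectral-radius criterion \Cref{14}). The divergence is at $|\mu|>\frac12$: the paper writes $\mu^2 S+\frac14 I=\mu^2(S-\lambda I)$ with $\lambda=-\frac{1}{4\mu^2}\in(-1,0)$, passes to the adjoint $S^*-\lambda I$ acting on the dual $l^q$ (resp.\ $l^1$), computes that its kernel is spanned by $(1,\lambda,\lambda^2,\ldots)$ --- which lies in the dual space precisely because $|\lambda|<1$ --- with $\dim(\ker)=1$ and $\dim(\ker{}^2)=2$, and then invokes the kernel-dimension obstruction \Cref{not_square}. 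You instead argue directly on the original space: any square root commutes with $S$, hence is determined by $c=Re_1$; the convolution relations force $c$ to be the coefficient sequence of the formal square root $\pm\frac12(1+4\mu^2 z)^{1/2}$; and Cauchy--Hadamard makes $(c_m)$ unbounded when $4|\mu|^2>1$, contradicting $c_m\to 0$. Both arguments are sound and both isolate exactly where the threshold $|\mu|=\frac12$ enters (eigenvector membership in the dual vs.\ radius of convergence); yours is more elementary and self-contained (no duality, no \Cref{not_square}, no appeal to \cite{AA02}), while the paper's kernel-dimension method is the one that survives the loss of coordinatewise decay: applied directly to $B-\lambda I$ it covers the backward-shift companion theorem even on $l^\infty$, which --- as you correctly note --- is precisely where your truncation argument and the decay $c_m\to 0$ would fail.
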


\begin{proof}
Since $S$ is similar to $\alpha S$ via a diagonal operator whenever $\alpha \in \{ z \in \mathbb C: |z| = 1\}$ we may assume without loss of generality that $\mu \ge 0$. 
We consider $3$ cases.

{\it Case 1}. If $0 \le \mu < \frac12$, then $r(\mu S \cdot \mu I) < \frac14$, and so by \Cref{14}  $\mu S_0$ is a commutator of the idempotent $P = \left(\begin{matrix} I_1 & 0 \\  0 & 0 \end{matrix} \right)$ and some idempotent $Q$.

{\it Case 2}. If $\mu = \frac12$, then by \Cref{operators} we must show that the operator $\frac14 S + \frac14 I = \frac14 (S+I)$ has a square root commuting with $S$. 
           It is well-known that $\sum_{n=0}^\infty |{ \frac12 \choose n} | < \infty$, and so 
           $R := \sum_{n=0}^\infty { \frac12 \choose n} S^n$ converges absolutely, $R S = S R$ and $R^2 = S+I$. 

{\it Case 3}. If $\mu > \frac12$, then put $\lambda := - \frac{1}{4 \mu^2} \in (-1,0)$, so that $\mu^2 S + \frac14 I = \mu^2(S-\lambda I)$. 
           In view of \Cref{operators} it is enough to show that $S-\lambda I$ has no square root on either $l^p$  ($1 \le p < \infty$) or $c_0$. 
            Then it is enough to prove that the adjoint operator $A := S^* - \lambda I$ has no square root on either the dual space $(l^p)^* = l^q$, where $q$ is the conjugate exponent of $p$, or the dual space $c_0^* = l^1$.
           To end this, we apply \Cref{not_square}. It is easy to show that 
           $$ \ker A = \bigvee \{(1, \lambda, \lambda^2, \lambda^3, \ldots, )\}  \ \ \ {\rm and} $$
           $$ \ker A^2 = \bigvee \{(1, \lambda, \lambda^2, \lambda^3, \ldots ) , (0, 1, 2 \lambda, 3 \lambda^2, 4 \lambda^3, \ldots )\} , $$
           so that $\dim (\ker A) = 1$ and $\dim (\ker A^2) = 2$. This completes the proof.
\end{proof}

In a similar manner one can prove the corresponding result for the unilateral backward shift $B$ on  either $l^p$ ($1 \le p \le \infty$) or $c_0$, that is, the operator defined by 
$B(x_1, x_2, x_3, \ldots ) = (x_2, x_3, x_4, \ldots )$. We will omit its proof. 

As before,  we first write $B$ in the block form $\left(\begin{matrix} 0 & * \\  * & 0 \end{matrix} \right)$.
Let $\{e_n\}_{n}$ denote the standard unit vectors in either $l^p$ or $c_0$, and write $X_1 = \bigvee_n \{e_{2 n}\}$ and $X_2 = \bigvee_n \{e_{2 n-1}\}$. Then $X = X_1 \oplus X_2$, and with respect to this decomposition $B$ has the form 
$$ B_0 := \left(\begin{matrix} 0 & B \\  I & 0 \end{matrix} \right) . $$

\begin{theorem}
Let $\mu$ be a complex number. Then $\mu B_0$ is a commutator of the idempotent $P = \left(\begin{matrix} I_1 & 0 \\  0 & 0 \end{matrix} \right)$ and some idempotent $Q$ on either $l^p$ ($1 \le p \le \infty$) or $c_0$
if and only if $|\mu| \le \frac12$.
\label{shift}
\end{theorem}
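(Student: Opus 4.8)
The plan is to follow the proof of the forward-shift theorem almost line for line, since with respect to $X = X_1 \oplus X_2$ the operator $\mu B_0$ has block form $\left(\begin{matrix} 0 & \mu B \\ \mu I & 0 \end{matrix}\right)$. In the notation of \Cref{operators} this means $B C + \frac14 I_1 = \mu^2 B + \frac14 I_1$ and $C B + \frac14 I_2 = \mu^2 B + \frac14 I_2$; as $X_1$ and $X_2$ are each a copy of the whole sequence space on which $B$ acts as the backward shift, both conditions of \Cref{operators} reduce to the single question of whether $\mu^2 B + \frac14 I$ has a square root commuting with $B$. As before one may assume $\mu \ge 0$ by the diagonal similarity, and then treat three cases.

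For $0 \le \mu < \frac12$ I would appeal to \Cref{14}: here $r(B C) = r(\mu^2 B) = \mu^2\, r(B) = \mu^2 < \frac14$, using $r(B) = 1$, so $\mu B_0$ is a commutator. For $\mu = \frac12$ the operator to be square-rooted is $\frac14 (I + B)$, and exactly as in the forward-shift argument the absolutely convergent series $R := \sum_{n=0}^\infty {\frac12 \choose n} B^n$ satisfies $R^2 = I + B$ and $R B = B R$; taking $S_1 = S_2 = \frac12 R$ then fulfils the square-root and intertwining hypotheses of \Cref{operators}.

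For $\mu > \frac12$ I would set $\lambda := -\frac{1}{4 \mu^2} \in (-1,0)$, so that $\mu^2 B + \frac14 I = \mu^2 (B - \lambda I)$, and it suffices by \Cref{operators} to show that $B - \lambda I$ has no square root. This is the one genuine departure from the forward-shift proof and the step I expect to be the crux. For the forward shift one was forced to pass to the adjoint, which is precisely why $l^\infty$ had to be excluded there; but the backward shift already carries the required kernel structure on the space itself. A direct computation gives $\ker (B - \lambda I) = \bigvee \{(1, \lambda, \lambda^2, \ldots)\}$, which is one-dimensional, and $\ker (B - \lambda I)^2 = \bigvee \{(1, \lambda, \lambda^2, \ldots),\ (0, 1, 2\lambda, 3\lambda^2, \ldots)\}$, which is two-dimensional; \Cref{not_square} then applies directly to $A = B - \lambda I$, with no duality argument. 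This is exactly what allows the endpoint $p = \infty$ to be included here although it was absent for the forward shift. The only care needed is to verify by the recursion $x_{n+1} = \lambda x_n + c \lambda^{n-1}$ that the second sequence lies in $\ker (B - \lambda I)^2 \setminus \ker (B - \lambda I)$, and to note that both eigenvectors belong to every $l^p$ and to $c_0$ (and to $l^\infty$) because $\abs{\lambda} < 1$ forces $\lambda^n \to 0$ and $n \lambda^{n-1} \to 0$.
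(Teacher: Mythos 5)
Your proof is correct and is essentially the proof the paper has in mind (the paper omits it, saying only that it goes ``in a similar manner'' to the forward-shift case): the reduction via \Cref{operators} to a commuting square root of $\mu^2 B + \frac14 I$, the unimodular diagonal similarity, the spectral-radius appeal to \Cref{14} for $\mu < \frac12$, and the binomial series for $\mu = \frac12$ all match the template exactly. You also correctly identify and verify the one genuine change for $\mu > \frac12$ --- applying \Cref{not_square} directly to $B - \lambda I$, whose kernel and second kernel are spanned by $(1, \lambda, \lambda^2, \ldots)$ and additionally $(0, 1, 2\lambda, 3\lambda^2, \ldots)$ on the space itself, with no passage to the adjoint --- which is precisely what allows the endpoint $p = \infty$ to be included here though it was excluded for the forward shift.
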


\subsection*{Acknowledgement}
The author acknowledges the financial support from the
  Slovenian Research Agency (research core funding No. P1-0222).


\end{document}